\documentclass[11pt,a4paper,twoside]{article}       
\usepackage[utf8]{inputenc}
\usepackage[english]{babel}
\usepackage{amssymb, amsmath, amsfonts, amsthm}
\usepackage{enumerate}
\usepackage{colonequals}
\usepackage{empheq,fancybox}
\usepackage[small]{titlesec}
\usepackage[noblocks]{authblk}

\usepackage{microtype}
\usepackage{ellipsis}
\usepackage[BCOR=20mm,DIV=11]{typearea}
 \newcommand{\hm}[1]{\leavevmode{\marginpar{\tiny%
 $ \hbox to 0mm{\hspace*{-0.5mm} $ \leftarrow $ \hss}%
 \vcenter{\vrule depth 0.1mm height 0.1mm width \the\marginparwidth}%
 \hbox to
 0mm{\hss $ \rightarrow $ \hspace*{-0.5mm}} $ \\\relax\raggedright #1}}}

\newcommand{\euler}{\mathrm{e}} 
\newcommand{\drm}{\mathrm{d}}
\newcommand{\dvol}{\mathrm{dvol}}
\newcommand{\RR}{\mathbb{R}}
\newcommand{\CC}{\mathbb{C}}
\newcommand{\NN}{\mathbb{N}}	
\newcommand{\ZZ}{\mathbb{Z}}

\newcommand{\HS}{\mathrm{HS}}

\newcommand{\tvert}[1]{{\left\vert\kern-0.25ex\left\vert\kern-0.25ex\left\vert #1 
    \right\vert\kern-0.25ex\right\vert\kern-0.25ex\right\vert}}
\renewcommand{\epsilon}{\varepsilon}

\DeclareMathOperator{\diam}{\mathop{diam}}

\DeclareMathOperator{\Ric}{\mathop{Ric}}
\DeclareMathOperator{\Vol}{\mathop{Vol}}

\DeclareMathOperator{\Ker}{\mathop{Ker}}

\DeclareMathOperator{\esssup}{esssup}
\newtheorem{theorem}{Theorem}[section]

\newtheorem{proposition}[theorem]{Proposition}
\newtheorem{corollary}[theorem]{Corollary}
\theoremstyle{definition}

\theoremstyle{remark}
\newtheorem{remark}[theorem]{Remark}
\theoremstyle{example}
\newtheorem{example}[theorem]{Example}
%
%
%
%

\newcommand{\hil}{\mathcal H}
\newcommand{\KK}{\mathbb{K}}	

\usepackage{color}
	\definecolor{darkred}{rgb}{0.5,0,0}
	\definecolor{darkgreen}{rgb}{0,0.5,0}
	\definecolor{darkblue}{rgb}{0,0,0.5}
\usepackage{authblk}

\begin{document}
\title{Bounds on the first Betti number - an approach via Schatten norm estimates on semigroup differences} 
%

%
%
\author[1]{Marcel Hansmann}
\author[2]{Christian Rose}
\author[1]{Peter Stollmann}
\affil[1]{Technische Universit\"at Chemnitz, Faculty of Mathematics, D - 09107 Chemnitz}
\affil[2]{Max-Planck Institute for Mathematics in the Sciences, D - 04103 Leipzig}

\date{}

\maketitle


\begin{abstract}
We derive new estimates for the first Betti number of compact Riemannian manifolds. Our approach relies on the Birman-Schwinger principle and Schatten norm estimates for semigroup differences. In contrast to previous works we do not require any a priori ultracontractivity estimates and we provide  bounds which explicitly depend on suitable integral norms of the Ricci tensor. 
\end{abstract}

\section{Introduction}   
The aim of this paper is to give estimates for the first  Betti number $b_1(M)$ of a compact  Riemannian manifold $M$. In particular, we show that $b_1(M)$ is small, or even zero, if the Ricci curvature of $M$ is 'mostly positive'. 

A starting point for our analysis is the paper \cite{ElworthyRosenberg-91}, where
a criterion is formulated under which $b_1(M)=0$. The results of \cite{ElworthyRosenberg-91} were later generalized and put into a more quantitative form in \cite{RoseStollmann-18} and \cite{CarronRose-18}, respectively (for some related work see also the literature cited in these two papers). The new feature of our main results can easily be explained: Roughly speaking, we employ methods 
from Functional Analysis and Operator Theory that originated in Mathematical Physics. This complements the standard approach using the Hodge theorem to identify the first Betti number with the dimension of the kernel of the  Laplacian acting on $1$--forms, and then to deduce bounds on $b_1(M)$ from trace norm estimates of the corresponding heat semigroup. However, the method we use in the present paper is different: In case that the Ricci curvature is mostly positive, an alternative, and obviously better, estimate can be obtained by means of the Birman--Schwinger principle, see Section \ref{sec2} and Proposition \ref{prop:birman}. In particular, instead of estimating
the trace norm of the semigroup itself, it suffices to bound the trace norm (or a more general Schatten norm) of an appropriate semigroup difference.

We derive such trace norm bounds by well--established factorization principles; however
the abstract results we obtain in Proposition \ref{prop:2.6} and Theorems \ref{thm:truncated} and \ref{thm:ultra} are new and of independent interest. The application of these abstract trace norm bounds to the geometric setting is then straightforward, as we show in Section~\ref{sec3}. Our main result, Theorem \ref{main}, gives a very satisfactory estimate:  For any compact Riemannian manifold $M$ of dimension $n$ and for $\rho_0>0$ and $t_0>0$, we have
 \begin{equation}\label{bettimain2}
 b_1(M)\le 4n\rho_0^{-2}\left\| (\Ric_\cdot-\rho_0)_-\right\|_{2,\HS}^2\left\| e^{-t_0(\Delta+\rho)}\right\|^2_{2,\infty} .
   \end{equation}
A detailed explanation of the notation used in this inequality will be given below. Let us just note that here the $L^2$--norm of the part of the Ricci tensor $\Ric$ below the positive threshold $\rho_0$ is used to control that $\Ric$ is indeed mostly positive. The ultracontractivity term on the right still depends on geometric data via the function $\rho$, which maps every point of the manifold to the lowest eigenvalue of the Ricci tensor. In Corollary \ref{cor:main}, we will exemplify how this part of the estimate can be controlled in geometric terms. 

As far as we can say, the above bound on $b_1(M)$ is the first containing a norm of the matrix-valued map $M\ni x\mapsto(\Ric_x-\rho_0)_-$ in an explicit and multiplicative way. All previous works on this matter were formulated given assumptions on the function $\rho$ only and offered right-hand sides of a much more complicated nature. Some more on this topic will be discussed in Section~\ref{sec3}.

\section{The Birman-Schwinger principle and Hilbert-Schmidt norm estimates}
\label{sec2}
%
%
%
%
Let us consider two selfadjoint  operators $H,H'$ on a Hilbert space $\hil$, 
such that $H \geq 0$ and $H' \geq \rho_0$ for some $\rho_0>0$. Then both 
operators generate strongly continuous (even analytic) semigroups, denoted by 
$(e^{-tH}; t \geq 0)$ and $(e^{-tH'}; t \geq 0)$, respectively, and in the 
following we assume that for some $t_0>0$ the semigroup difference
$$ D_{t_0}:=e^{-t_0H} - e^{-t_0H'}$$
is compact. We can think of $H$ and $H'$ as being 'small' perturbations of each 
other, the smallness being reflected in the compactness of $D_{t_0}$.
\begin{remark}
As an example the reader should have in mind the case where $\hil$ is a 
(vector-valued) $L^2$-space and $H'=H+V$ is a potential perturbation. Here the 
compactness of $D_{t_0}$  will follow from a suitable smallness assumption on 
$V$. This case will be discussed in more detail in the second part of this 
section.  
\end{remark}
From the spectral (mapping) theorem it follows that the spectra of 
generator and semigroup are related by
$$ \sigma(e^{-tH})\setminus \{0\} = \{ e^{-t \lambda} : \lambda \in 
\sigma(H)\},$$
and that the same identity is valid for the essential spectra $\sigma_{ess}$ and 
discrete spectra $\sigma_d$ as well. Moreover, for every $\lambda \in \RR$ we 
have 
$$ \ker (H-\lambda) = \ker (e^{-tH}-e^{-t\lambda}),$$
so that also the multiplicities of the corresponding eigenvalues $\lambda_0 \in 
\sigma_d(H)$ and $e^{-t\lambda_0} \in \sigma_d(e^{-tH})$ coincide. 

Together with  Weyl's classical theorem on the invariance of the essential 
spectrum under compact perturbations \cite{zbMATH02637394}, the above facts and 
assumptions imply that
$$ \sigma_{ess}(H)=\sigma_{ess}(H') \subset [\rho_0,\infty)$$
and that the spectrum of $H$ in $[0,\rho_0)$ is purely discrete. In particular, 
this shows that the dimension of the kernel of $H$ is always finite (since 
either $0$ is in the resolvent set or it is an eigenvalue of finite 
multiplicity). 

\begin{remark}
 For our main application we will consider the case where $H=\Delta^1$ denotes the Hodge-Laplacian 
on the Riemannian manifold $M$, in which case $\dim \ker(H)$ coincides with the first Betti number $b_1(M)$.
\end{remark}

Our next goal is to obtain an upper bound on $\dim \ker(H)$ given some more 
restrictive assumptions on $D_{t}$. To this end, we need to introduce the 
Schatten-von Neumann classes $\mathcal S_p, p > 0,$ which consist of all compact 
operators $K$ on $\hil$ whose sequence of singular numbers $(s_n(K))$ is in 
$l^p(\NN)$. One can define a (quasi-) norm on $\mathcal S_p$ by setting 
$\|K\|_{\mathcal S_p}:= \|(s_n(K))\|_{p}$. It is well known that $(\mathcal S_p, 
\|.\|_{\mathcal S_p})$ is a (quasi-) Banach space and a two sided ideal in the 
algebra $\mathcal L (\hil)$ of all bounded operators on $\hil$, see, e.g., 
\cite{b_Gohberg69}
\begin{remark}
Operators in $\mathcal S_1$ and $\mathcal S_2$ are usually called \emph{trace 
class} and \emph{Hilbert-Schmidt} operators, respectively. We will follow this 
practice in the present article and will also use the more suggestive notation 
$$ \|K\|_{\text{tr}}:= \|K\|_{\mathcal S_1} \quad \text{and} \quad 
\|K\|_{\text{HS}}:=\|K\|_{\mathcal S_2}.$$
For later purposes we recall that the Hilbert-Schmidt norm can be computed as 
$$\|K\|_{\text{HS}}^2 = \sum_{\alpha \in A} \|K\varphi_\alpha\|^2,$$
 where $(\varphi_\alpha)_{\alpha \in A}$ is any orthonormal basis of $\hil$. 
 
\end{remark}
In order to obtain bounds on $\dim \ker (H)$ we will rely on the so-called 
'Birman-Schwinger principle'. This principle was first stated in the context of 
Schr\"odinger operators in \cite{MR0142896, MR0129798} and originally referred 
to the following fact: Under suitable assumptions on the potential $V$ a 
negative number $\lambda$ is a discrete eigenvalue of $-\Delta+ V$ if and only 
if $1$ is in the spectrum of the compact integral operator 
$(\lambda+\Delta)^{-1}V$. The first part of the following proposition can be 
regarded as an abstract version of this principle.
\begin{proposition} \label{prop:birman}
Let $H \geq 0, H' \geq \rho_0 > 0$ and $D_{t}:=e^{-tH}-e^{-tH'},t >0,$ be 
defined as above. Then the following holds:  
\begin{enumerate}
\item[(i)] $ \ker (H) = \ker( (I-e^{-tH'})^{-1}D_{t} - I),$ where $I \in 
\mathcal L(\hil)$ denotes the identity.
\item[(ii)] If $D_{t_0} \in \mathcal S_p$ for some $p>0$ and $t_0>0$, then
  \begin{equation}
\dim \ker (H) \leq \|(I-e^{-t_0H'})^{-1}D_{t_0}\|_{\mathcal S_p}^p.\label{eq:1}
\end{equation}
In particular,
\begin{equation}
  \label{eq:2}
\dim \ker (H) \leq (1-e^{-\rho_0t_0})^{-p}\|D_{t_0}\|_{\mathcal S_p}^p.  
\end{equation}
\end{enumerate}
\end{proposition}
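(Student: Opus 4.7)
My plan is to verify the two inclusions directly, using the fact that $H'\geq \rho_0>0$ implies $\|e^{-tH'}\|\leq e^{-\rho_0 t}<1$, so that $I-e^{-tH'}$ is boundedly invertible on $\hil$. For the inclusion ``$\subseteq$'', I pick $\varphi\in\ker(H)$. The identification $\ker(H)=\ker(e^{-tH}-I)$ recalled in the excerpt gives $e^{-tH}\varphi=\varphi$, so that
\[
 D_t\varphi = \varphi - e^{-tH'}\varphi = (I-e^{-tH'})\varphi,
\]
and applying $(I-e^{-tH'})^{-1}$ on the left I obtain $(I-e^{-tH'})^{-1}D_t\varphi=\varphi$. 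The reverse inclusion is the same chain read backwards: from $(I-e^{-tH'})^{-1}D_t\varphi=\varphi$ I get $D_t\varphi=(I-e^{-tH'})\varphi$, which after cancelling $-e^{-tH'}\varphi$ on both sides becomes $e^{-tH}\varphi=\varphi$, i.e.\ $\varphi\in\ker(H)$.

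\textbf{Plan for part (ii).} Set $K\colonequals (I-e^{-t_0H'})^{-1}D_{t_0}$. Since $D_{t_0}\in\mathcal S_p$ and $(I-e^{-t_0H'})^{-1}$ is bounded, the Schatten ideal property gives $K\in\mathcal S_p$ with
\[
 \|K\|_{\mathcal S_p}\leq \|(I-e^{-t_0H'})^{-1}\|\,\|D_{t_0}\|_{\mathcal S_p}.
\]
By part (i), $\dim\ker(H)=\dim\ker(K-I)$, which equals the geometric multiplicity of $1$ as an eigenvalue of the compact operator $K$ and is therefore bounded by its algebraic multiplicity $m$. The key analytic input is then Weyl's inequality for Schatten norms, which asserts
\[
 \sum_{k}|\lambda_k(K)|^p \;\leq\; \sum_k s_k(K)^p \;=\; \|K\|_{\mathcal S_p}^p,
\]
the eigenvalues on the left being enumerated with algebraic multiplicity. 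The eigenvalue $1$ contributes at least $m\cdot 1^p=m$ to the left-hand side, hence $\dim\ker(H)\leq m\leq \|K\|_{\mathcal S_p}^p$, which is \eqref{eq:1}.

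\textbf{Plan for \eqref{eq:2}.} To turn \eqref{eq:1} into the geometrically useful bound \eqref{eq:2}, I just need a quantitative estimate on $\|(I-e^{-t_0H'})^{-1}\|$. The spectral theorem applied to the selfadjoint operator $H'\geq\rho_0$ shows that $\sigma(I-e^{-t_0H'})\subseteq [1-e^{-\rho_0 t_0},1]$, so
\[
 \|(I-e^{-t_0H'})^{-1}\|\leq (1-e^{-\rho_0 t_0})^{-1},
\]
and inserting this in the previous display and raising to the $p$-th power yields \eqref{eq:2}.

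\textbf{Main obstacle.} The only non-routine ingredient is the step from the geometric multiplicity of the eigenvalue $1$ to a Schatten norm bound, since $K$ is not selfadjoint in general. This is handled by invoking Weyl's inequality (valid for all $p>0$, including the quasi-Banach range $0<p<1$, which matters if one eventually wants to use $\mathcal S_p$-bounds beyond the trace class). Everything else is algebraic manipulation together with the standard fact that $\ker(H)$ agrees with $\ker(e^{-tH}-I)$, already recorded in the excerpt.
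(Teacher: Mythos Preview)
Your proof is correct and follows essentially the same approach as the paper's: both parts (i) and (ii) are proved by the same chain of equivalences using $\ker(H)=\ker(e^{-tH}-I)$, the same appeal to Weyl's eigenvalue inequality $\sum_k|\lambda_k(K)|^p\leq \|K\|_{\mathcal S_p}^p$ for the compact operator $K=(I-e^{-t_0H'})^{-1}D_{t_0}$, and the same spectral bound $\|(I-e^{-t_0H'})^{-1}\|\leq (1-e^{-\rho_0 t_0})^{-1}$. Your explicit mention that geometric multiplicity is bounded by algebraic multiplicity is left implicit in the paper but is indeed the point.
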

We note that $(I-e^{-tH'})$ is indeed invertible, since $\sigma(e^{-tH'}) 
\subset [0,e^{-\rho_0t}]$. 
In a different setting, the idea to use Schatten norm bounds on semigroup differences to obtain bounds on eigenvalues has also been used in \cite[Remark 2.6]{MR2836430} (see also \cite{MR2772053}).     
\begin{proof}[Proof of Proposition \ref{prop:birman}]
(i) We have 
$$ \ker(H) = \ker(e^{-tH}-I) = \ker((I-e^{-tH'})^{-1}D_t-I).$$
Here the first equality follows (as stated above) from the spectral theorem 
(functional calculus) and the second can be proved as follows: 
\begin{eqnarray*}
 e^{-tH}f=f \Leftrightarrow \quad D_tf = (I-e^{-tH'})f \quad \Leftrightarrow 
\quad (I-e^{-tH'})^{-1}D_tf = f. 
\end{eqnarray*}
(ii) Let $\lambda_1(K), \lambda_2(K), \ldots$ denote the sequence of eigenvalues 
of the compact operator $K:=(I-e^{-t_0H'})^{-1}D_{t_0}$, each eigenvalue being 
counted according to its algebraic multiplicity. By part (i) we know that at 
least $N:=\dim \ker(H)$ of these eigenvalues are equal to $1$, so clearly 
$$ N \leq \sum_{n} |\lambda_n(K)|^p.$$
Another classical result of Weyl \cite{MR0030693}  asserts that
$$ \sum_{n} |\lambda_n(K)|^p \leq \sum_{n} s_n(K)^p = \|K\|_{\mathcal S_p}^p,$$
which concludes the proof of (\ref{eq:1}). Finally, (\ref{eq:2}) follows from 
(\ref{eq:1}) using the estimate
$$ \|(I-e^{-t_0H'})^{-1}D_{t_0}\|_{\mathcal S_p} \leq \|(I-e^{-t_0H'})^{-1}\| 
\|D_{t_0}\|_{\mathcal S_p} \leq (1-e^{-\rho_0t_0})^{-1} \|D_{t_0}\|_{\mathcal 
S_p}.$$
\end{proof}

Our ultimate goal is to apply the previous proposition to estimate the first 
Betti number of Riemannian manifolds. To this end, as a first step we now 
provide suitable $\mathcal S_p$-norm estimates for semigroup differences on 
$L^2$-spaces. Actually, in the present paper we will restrict ourselves to the 
case of Hilbert-Schmidt norm estimates, since, in addition to being easier to 
handle, these are particularly well-suited for the applications we have in 
mind.\\

From now on we fix a $\sigma$-finite measure space $(X,\mathcal F,m)$ and 
consider the Hilbert space $L^2(X)=L^2(X;\KK)$ for $\KK \in \{ \RR, 
\CC\},$ as well as the vector-valued version $L^2(X;\KK^n)$ for $n \in \NN$. 
\begin{remark} 
  Note that for most of what we show in the following, we could replace $\KK^n$ 
by a $\KK$-Hilbert space $\mathcal K$.
\end{remark}
As usual, inner products and norms in the vector- and scalar-valued $L^2$-spaces will be denoted by the same symbols  $(\cdot\mid\cdot )$ and $\|.\|_2$, respectively. In particular, for $f=(f_1,\ldots,f_n) \in L^2(X;\KK^n)$ we have 
$ \|f\|_2^2 = \sum_{k=1}^n \|f_k\|_2^2.$  Also, we use the same symbol $|.|$ to denote the absolute value on $\KK$ as well as the euclidian norm on $\KK^n$.\\

As above we study a nonnegative 
selfadjoint operator $H$ in $L^2(X;\KK^n)$ and its semigroup $(e^{-tH}; t \geq 
0)$. In the present setting the semigroup differences we are concerned with are 
given by
$$ e^{-tH}-e^{-t(H+V)},$$
where $V$ is a multiplication operator in the following sense:
$$ V : X \to \KK^{n \times n} \text{ is measurable}$$
and for every $x \in X$ the matrix $V(x)$ is nonnegative and hermitian 
(respectively symmetric). 
The entries of this matrix are written as $V_{ij}(x)$. In the following we use 
the shorthand notation
\begin{equation}
V \succeq 0,\label{eq:9}
\end{equation}
to indicate that $V$ satisfies the above conditions. We note that 
$$ V \succeq 0 \quad \Rightarrow \quad V \geq 0,$$
i.e., $V$ is also a nonnegative operator in $L^2(X;\KK^n)$. 

The operator $H+V$ (corresponding to $H'$ in our above considerations) denotes 
the form sum of the two operators. Note that in this generality, the 
corresponding form need not be densely defined. In this case, we obtain a 
selfadjoint operator in the subspace $\hil_0 = \overline{\mathcal D (H^{1/2}) 
\cap \mathcal D (V^{1/2})}$ and extend the corresponding semigroup by $0$ to 
$\hil_0^\perp$. However, in most of the applications we have in mind, such 
complications will not arise since $V$ is bounded.
 
In the following, we use $\|.\|_{\text{HS}}$ to denote the Hilbert-Schmidt norm 
of operators on different spaces. Moreover, for $V$ as above we set
\begin{eqnarray}
\|V\|_{2,\text{HS}}^2 &:=& \int_X \|V(x)\|_{\text{HS}}^2 \drm m(x) \nonumber \\
&=& \sum_{i,j=1}^n \|V_{ij}\|_2^2. \label{hs}
\end{eqnarray}
We write $V \in L^2(X;\KK^{n\times n})$ provided $\|V\|_{2,\text{HS}}$ is 
finite.  Note that in this case $V$ is a bounded operator from $L^\infty(X;\KK^n)$ to $L^2(X;\KK^n)$.\\ 

Before presenting our Hilbert-Schmidt norm estimates on the above semigroup 
difference, we single out one important step from their proof. Here and in the sequel we use the standard shorthand notation
$$
\|T\|_{p,q}:=\| T\|_{\mathcal L (L^p(X;\KK^n),L^q(X;\KK^n))}.
$$

\begin{proposition}\label{prop:2.6} 
 Assume that $T \in \mathcal L (L^2(X;\KK^n),L^\infty(X;\KK^n))$ and $V \in 
L^2(X;\KK^{n \times n})$. Then the operator $VT \in \mathcal L (L^2(X;\KK^n))$ is Hilbert-Schmidt and
$$ \|VT\|_{\HS} \leq \sqrt{n} \cdot \|V\|_{2,\HS} \cdot 
\|T\|_{2,\infty}.$$ 
\end{proposition}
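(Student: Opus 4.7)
The plan is to compute $\|VT\|_{\HS}^2$ via the Parseval formula $\|K\|_{\HS}^2 = \sum_\alpha \|K\varphi_\alpha\|^2$ recalled in the excerpt, and to reduce the bound to a pointwise estimate in which the matrix-valued nature of $V$ is absorbed by dominating the operator norm of $V(x)$ by its Hilbert--Schmidt norm. To this end I would pick any orthonormal basis $(\varphi_\alpha)$ of $L^2(X;\KK^n)$, interchange sum and integral (Tonelli), and use the elementary pointwise bound $|V(x)w|^2 \le \|V(x)\|_{\HS}^2\,|w|^2$ for $w\in\KK^n$. This reduces the problem to the a.e.\ estimate
\[
  \sum_\alpha |(T\varphi_\alpha)(x)|^2 \le n\,\|T\|_{2,\infty}^2,
\]
after which the claim follows by integrating against $\|V(x)\|_{\HS}^2$ and recognising the definition (\ref{hs}).

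For the displayed estimate I would argue componentwise. For each $i \in \{1,\dots,n\}$, the hypothesis $T\in\mathcal{L}(L^2(X;\KK^n),L^\infty(X;\KK^n))$ gives, for any $\varphi\in L^2(X;\KK^n)$, the pointwise a.e.\ bound $|(T\varphi)_i(x)| \le |(T\varphi)(x)| \le \|T\varphi\|_\infty \le \|T\|_{2,\infty}\|\varphi\|_2$. Fixing a countable dense subset of $L^2(X;\KK^n)$ (I will use separability of $L^2$), I obtain a common null set outside of which $\varphi \mapsto (T\varphi)_i(x)$ is a well-defined bounded linear functional of norm at most $\|T\|_{2,\infty}$. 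Applying the Riesz representation theorem gives $\psi_{x,i} \in L^2(X;\KK^n)$ with $\|\psi_{x,i}\|_2 \le \|T\|_{2,\infty}$ and $(T\varphi)_i(x) = \sprod{\psi_{x,i}}{\varphi}$. Parseval's identity then yields $\sum_\alpha |(T\varphi_\alpha)_i(x)|^2 = \|\psi_{x,i}\|_2^2 \le \|T\|_{2,\infty}^2$, and summing over $i$ produces the required factor of $n$.

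Putting everything together,
\[
  \|VT\|_{\HS}^2 \le \int_X \|V(x)\|_{\HS}^2 \sum_\alpha |(T\varphi_\alpha)(x)|^2 \,\drm m(x) \le n\,\|T\|_{2,\infty}^2\,\|V\|_{2,\HS}^2,
\]
which is exactly the claim after taking square roots. I expect the only real obstacle to be the measure-theoretic step of upgrading the a.e.\ bound on $(T\varphi)_i(x)$ — valid for each individual $\varphi$ — to a single null set that works for the Riesz functional $\varphi\mapsto(T\varphi)_i(x)$; this is handled cleanly by the separability of $L^2(X;\KK^n)$, which holds in all settings relevant to the applications the paper has in mind.
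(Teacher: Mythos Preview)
Your argument is correct under the separability assumption you flag, and it follows a genuinely different route from the paper's proof. The paper first treats the scalar case $n=1$ by invoking an external trace--norm estimate (Corollary~2 of \cite{DemuthSSvC-95}) applied to the factorization $\|VT\|_{\HS}^2=\|T^*|V|^2T\|_{\text{tr}}$, and then reduces the vector--valued case to the scalar one by measurably diagonalizing $V$ via a family of unitaries $U(x)$ (citing \cite{MR3233068}) and splitting $U^*T$ into scalar components $T_{ij}:L^2(X)\to L^\infty(X)$. Your approach bypasses both external ingredients: the pointwise bound $|V(x)w|\le\|V(x)\|_{\HS}|w|$ replaces the diagonalization (and incidentally never uses that $V(x)$ is hermitian), while the Riesz--representation/Parseval argument for $\sum_\alpha|(T\varphi_\alpha)(x)|^2$ replaces the cited trace--norm lemma by, in effect, rederiving a kernel representation for $T:L^2\to L^\infty$. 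This makes your proof more self--contained and transparent. The trade--off is precisely the one you identify: the paper's route works for arbitrary $\sigma$--finite $(X,\mathcal F,m)$, whereas yours needs $L^2(X;\KK^n)$ separable to secure a single null set that serves simultaneously for the countable dense family and the countable ONB; since $\sigma$--finiteness alone does not force separability, your argument does not quite reach the proposition in its stated generality, though it covers every application in the paper.
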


The proof of this proposition relies on the following result from 
\cite{DemuthSSvC-95} (Corollary 2 in that paper), which provides an estimate on 
the trace norm of certain operators on scalar-valued functions: If $A \in 
\mathcal L (L^1(X),L^2(X)), B \in \mathcal L (L^2(X),L^1(X))$ and if there 
exists $\Phi \in L^1(X)$ such that $|Bf| \leq \Phi$ for every $f$ in the unit 
ball of $L^2(X)$, then 
\begin{equation}
\|AB\|_{\text{tr}} \leq \|A\|_{1,2} \|\Phi\|_1.\label{eq:5}
\end{equation}
\begin{proof}[Proof of Proposition \ref{prop:2.6}]  
We consider the scalar case $n=1$ first: Here, using the identity
$$ \|VT\|_{\text{HS}}^2 = \|T^* |V|^2 T \|_{\text{tr}},$$
we can apply the aforementioned Corollary from \cite{DemuthSSvC-95} with 
$A=T^*|_{L^1}, B=|V|^2T$ and $\Phi:=|V|^2 \|T\|_{2,\infty}$. Indeed, since $L^1$ 
is isometrically embedded in $(L^\infty)'$, we have $A \in \mathcal L 
(L^1(X),L^2(X))$ and $\|A\|_{1,2} \leq \|T^*\|_{(L^\infty)',L^2} = 
\|T\|_{2,\infty}$. Moreover, it is easily seen that $B \in \mathcal L 
(L^2(X),L^1(X))$, that $\Phi \in L^1(X)$ with $\|\Phi\|_1 = \|V\|_2^2 
\|T\|_{2,\infty}$ and that  $|Bf| \leq \Phi$ for $f$ in the unit ball of 
$L^2(X)$. Hence, as desired, we obtain from (\ref{eq:5}) that
$$ \|VT\|_{\text{HS}}^2 = \|T^*|V|^2T\|_{\text{tr}} \leq \|T\|_{2,\infty}^2 
\|V\|_2^2.$$  
The vector-valued case is now readily deduced from the scalar one. First, we diagonalize: Since $(V(x))_{x \in X}$ is a measurable family of  hermitian (symmetric) matrices in $\KK^{n\times n}$, we know from \cite[Theorem 2.1]{MR3233068} that there exists a measurable family $(U(x))_{x \in X}$ of unitary (orthogonal) matrices in $\KK^{n \times n}$ such that for every $x \in X$ the matrix
$$ \Lambda(x):= U(x)^* V(x)U(x)$$
is diagonal. 
\begin{remark}
For completeness, we note that the afore mentioned measurable diagonalization is proved in \cite{MR3233068} only for families $(V(x))_{x \in X}$ of positive matrices. However, this easily generalizes to the case considered above. Indeed, first we observe that measurable diagonalization is clearly possible if the family $(V(x))_{x \in X}$ is uniformly bounded below (just consider $V(x) + c \cdot I$ for $c$ sufficiently large). The general case then follows by partitioning $X$ into the union of measurable sets $X_k:=\{ x \in X : k \leq \lambda_1(V(x)) < k+1\}, k \in \ZZ,$ and considering the restrictions of $V$ to these $X_k$'s. That these sets are indeed measurable follows from the measurability of $x \mapsto \lambda_1(V(x))$ (the smallest eigenvalue), which in turn follows from the min-max principle.
\end{remark}
Now for $x \in X$ and $f \in L^2(X;\KK^n)$ we define the unitary operator $U$ on $L^2(X;\KK^n)$ by
$$ (Uf)(x):=U(x)f(x),$$
and we define the operator $\Lambda$ in $L^2(X;\KK^n)$ by
$$(\Lambda f)(x):= \Lambda(x) f(x), \qquad \mathcal D(\Lambda)=\{ f \in L^2(X;\KK^n) : Uf \in \mathcal D(V)\}.$$
Finally, let us define
$$ T_{ij} : L^2(X) \to L^\infty(X), \quad T_{ij} f := ( U^*T(fe_i) | e_j), \qquad 
i,j \in \{ 1, \ldots, n\}, $$
where $e_1,\ldots, e_n$ denote the standard basis vectors of $\KK^n$. For later purposes we note that for $f \in L^2(X)$ we have 
\begin{eqnarray} 
  \|T_{ij}f\|_\infty &=& \esssup_{x \in X} \big| \big(U^*(x)[T(fe_i)](x)|e_j\big)\big| \leq \esssup_{x \in X} |T(fe_i)(x)| \nonumber \\
&\leq& \|T\|_{2,\infty} \|fe_i\|_2 = \|T\|_{2,\infty} \|f\|_2.   \label{eq:10}
\end{eqnarray}
Now we compute 
the Hilbert-Schmidt norm of the operator $VT$ with respect to the orthonormal basis 
$(\varphi_{\alpha} \otimes e_i)_{a \in A, i=1,\ldots,n}$, where 
$(\varphi_\alpha)_{\alpha \in A}$ is some ONB of $L^2(X)$:
\begin{eqnarray*}  
  \|VT\|_{\text{HS}}^2 &=& \|U\Lambda U^*T\|_{\text{HS}}^2 = \|\Lambda U^*T\|_{\text{HS}}^2 \\
&=& \sum_{\alpha \in A} \sum_{i=1}^n \|\Lambda U^*T(\varphi_\alpha 
\otimes e_i)\|_2^2 
= \sum_{\alpha \in A} \sum_{i=1}^n \sum_{j=1}^n \|(\Lambda U^*T(\varphi_\alpha \otimes 
e_i)|e_j)\|_2^2 \\
&=& \sum_{\alpha \in A} \sum_{i=1}^n \sum_{j=1}^n 
\left\|\Lambda_{jj}T_{ij}\varphi_\alpha\right\|_2^2 
= \sum_{i=1}^n \sum_{j=1}^n 
\left\|\Lambda_{jj}T_{ij}\right\|_{\text{HS}}^2.
\end{eqnarray*}
From the scalar case and (\ref{eq:10})  we obtain that
$$ \|\Lambda_{jj}T_{ij}\|_{\text{HS}} \leq \|\Lambda_{jj}\|_2 \|T_{ij}\|_{2,\infty} \leq 
\|\Lambda_{jj}\|_2 \|T\|_{2,\infty}.$$
Hence, the previous identities imply that
\begin{eqnarray*}
  \|VT\|_{\text{HS}}^2 &\leq& n \|T\|_{2,\infty}^2 \sum_{j=1}^n 
 \|\Lambda_{jj}\|_{2}^2 = n \|T\|_{2,\infty}^2 \|\Lambda\|_{2,\text{HS}}^2=n \|T\|_{2,\infty}^2 \|V\|_{2,\text{HS}}^2.
\end{eqnarray*}
This concludes the proof of the proposition. 
\end{proof}
In the following we present several estimates on the Hilbert-Schmidt norm of the 
difference of the semigroups $e^{-tH}$ and $e^{-t(H+V)}$, which are defined as 
above. We begin with two results for the case of bounded $V$, which will be used 
later on in our estimate on the first Betti number of compact manifolds. After 
that, we extend one of these results to unbounded potentials. While unbounded 
potentials will not play a role in the present article, we include them for 
completeness and because they might well become important in future work (for 
instance, when considering the case of non-compact manifolds).
\begin{remark}  
  Hilbert-Schmidt norm estimates for semigroup differences of operators in 
$L^2(X)$ have been studied in a variety of contexts and the literature on the 
subject is extensive. For an overview and many references we refer to the 
monograph \cite{MR1772266}, which studies such estimates for generators of 
Feller semigroups. However, as far as we can say, the following results are the 
first estimates concerning operators on vector-valued functions. Moreover, in 
the stated generality, we think that they  might even be new in the scalar 
case. 
\end{remark}
 The natural setting of our first result is when the involved 
semigroups $(T(t))_{t\ge 0}$ are \emph{ultracontractive},
i.e., for every $t>0$ they map $L^2(X; 
\KK^n)$ to $L^\infty(X;\KK^n)$ and  $\|T(t)\|_{2,\infty}<\infty$.  However, we need this additional property for one $t_0$ only.

\begin{theorem}\label{thm:truncated} 
Assume that $H \geq 0$ is a selfadjoint operator in $L^2(X;\KK^{n})$ and $V 
\in L^2\cap L^\infty(X;\KK^{n \times n})$. Moreover, suppose 
that for some $t_0>0$ we have $e^{-t_0H},e^{-t_0(H+V)}\in \mathcal L (L^2(X;\KK^n),L^\infty(X;\KK^n))$. Then
{\small
\begin{eqnarray}
 \|e^{-2t_0H}-e^{-2t_0(H+V)}\|_{\HS}  &\leq& \sqrt{n} \; \|V\|_{2,\HS} 
\left(\|e^{-t_0H}\|_{2,\infty} + \|e^{-t_0(H+V)}\|_{2,\infty} \right) \cdot 
\nonumber\\
&& \cdot \int_0^{t_0} \|e^{-s(H+V)}\|_{2,2} \:\drm s.  \label{eq:6} 
\end{eqnarray}
}

\end{theorem}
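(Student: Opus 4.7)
The plan is to combine an algebraic splitting at time $t_0$ with Duhamel's formula and Proposition~\ref{prop:2.6}. The starting point is the identity
\begin{equation*}
e^{-2t_0H}-e^{-2t_0(H+V)}= e^{-t_0H}\bigl(e^{-t_0H}-e^{-t_0(H+V)}\bigr) + \bigl(e^{-t_0H}-e^{-t_0(H+V)}\bigr)e^{-t_0(H+V)},
\end{equation*}
which places the two ultracontractive factors symmetrically on the outside. Applying the Duhamel identity
\[
e^{-t_0H}-e^{-t_0(H+V)}=\int_0^{t_0} e^{-(t_0-s)H}\, V\, e^{-s(H+V)}\,\drm s
\]
(rigorous because $V$ is bounded, so that $H+V$ is selfadjoint on $\dom(H)$) to each of the two differences and absorbing the outer factors into the integrands, I rewrite the left--hand side of (\ref{eq:6}) as the sum of two Bochner integrals over $[0,t_0]$ with integrands
\[
\text{(A)}\ e^{-(2t_0-s)H}\,V\,e^{-s(H+V)}\quad\text{and}\quad\text{(B)}\ e^{-(t_0-s)H}\,V\,e^{-(s+t_0)(H+V)}.
\]

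Next I estimate the Hilbert--Schmidt norm of each integrand pointwise in $s\in[0,t_0]$, using the HS-ideal inequalities $\|AB\|_\HS\le\|A\|_\HS\|B\|$ and $\|AB\|_\HS\le\|A\|\|B\|_\HS$ together with Proposition~\ref{prop:2.6}. For (A), I factor as $[e^{-(2t_0-s)H}V]\cdot[e^{-s(H+V)}]$, use that the HS norm is adjoint-invariant and that $e^{-(2t_0-s)H}$ and $V$ are selfadjoint to get $\|e^{-(2t_0-s)H}V\|_\HS=\|Ve^{-(2t_0-s)H}\|_\HS$, and then apply Proposition~\ref{prop:2.6} to the latter. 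Since $2t_0-s\ge t_0$, the semigroup law together with $\|e^{-tH}\|_{2,2}\le 1$ yields $\|e^{-(2t_0-s)H}\|_{2,\infty}\le\|e^{-t_0H}\|_{2,\infty}$. For (B), I factor as $[e^{-(t_0-s)H}]\cdot[Ve^{-(s+t_0)(H+V)}]$, dominate the left factor in operator norm by $1$, and apply Proposition~\ref{prop:2.6} directly to the right factor; the analogous inequality $\|e^{-(s+t_0)(H+V)}\|_{2,\infty}\le\|e^{-t_0(H+V)}\|_{2,\infty}\|e^{-s(H+V)}\|_{2,2}$ then produces the factor $\|e^{-s(H+V)}\|_{2,2}$.

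Collecting the two pointwise bounds gives, for each $s\in[0,t_0]$, a common integrand estimate of the form
\[
\sqrt{n}\,\|V\|_{2,\HS}\bigl(\|e^{-t_0H}\|_{2,\infty}+\|e^{-t_0(H+V)}\|_{2,\infty}\bigr)\|e^{-s(H+V)}\|_{2,2},
\]
and integrating in $s$ over $[0,t_0]$ yields (\ref{eq:6}). The only technical points to justify are the Duhamel formula and the interchange of the HS norm with the $s$-integral; both are standard, since the boundedness of $V$ makes $H+V$ selfadjoint on $\dom(H)$ and the integrands strongly (in fact, analytically) continuous in $s$. Conceptually, the main design choice is the symmetric splitting at $t_0$: this is precisely what lets us access the ultracontractivity assumption at time $t_0$ for \emph{both} semigroups while leaving only $L^2$-contractive residual factors behind.
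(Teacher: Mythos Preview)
Your argument is correct and is essentially the paper's proof, reorganized: the paper applies Duhamel directly on $[0,2t_0]$ (in the form $\int_0^{2t_0} e^{-(2t_0-s)(H+V)}Ve^{-sH}\,\drm s$), splits the integral at $s=t_0$, and then uses the adjoint trick and Proposition~\ref{prop:2.6} on each half---which is exactly what your telescoping identity followed by Duhamel on $[0,t_0]$ produces (with the roles of $H$ and $H+V$ swapped, since you used the other orientation of Duhamel). The only cosmetic difference is that your algebraic splitting makes it more transparent from the outset why ultracontractivity of \emph{both} semigroups at the single time $t_0$ is the natural hypothesis.
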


 \begin{proof} 
By the Duhamel principle, see \cite{MR710486}, formula (1.8) on page 78, we 
obtain that
\begin{eqnarray*}
  && e^{-2t_0 H}- e^{-2t_0(H+V)} = \int_0^{2t_0} e^{-(2t_0-s)(H+V)}Ve^{-sH}\drm s \\
&=& \int_0^{t_0} e^{-(2t_0-s)(H+V)}Ve^{-sH}\drm s + \int_{t_0}^{2t_0} 
e^{-(2t_0-s)(H+V)}Ve^{-sH}\drm s
\end{eqnarray*}
and hence 
\begin{eqnarray*}
  \| e^{-2t_0 H}- e^{-2t_0(H+V)}\|_{\text{HS}} &\leq& 
 \int_0^{t_0} \| e^{-(2t_0-s)(H+V)}Ve^{-sH} \|_{\text{HS}} \:\drm s \\
&& + \int_{t_0}^{2t_0} \| e^{-(2t_0-s)(H+V)}Ve^{-sH} \|_{\text{HS}} \:\drm s.
\end{eqnarray*}
We estimate the integrals separately but with the same idea and start with the 
second one: With a change of variables and an application of Proposition 
\ref{prop:2.6} (with $T=e^{-t_0H}$) we obtain
\begin{eqnarray*}
&& \int_{t_0}^{2t_0} \| e^{-(2t_0-s)(H+V)}Ve^{-sH} \|_{\text{HS}} \:\drm s = 
\int_{0}^{t_0} \| e^{-(t_0-r)(H+V)}Ve^{-(t_0+r)H} \|_{\text{HS}} \:\drm r \\
&\leq& \int_{0}^{t_0} \| e^{-(t_0-r)(H+V)}\|_{2,2} \|Ve^{-t_0H} \|_{\text{HS}} 
\|e^{-rH}\|_{2,2} \:\drm r \\
&\leq& \sqrt{n}\|V\|_{2,\text{HS}} \|e^{-t_0H}\|_{2,\infty} \int_{0}^{t_0} \| 
e^{-(t_0-r)(H+V)}\|_{2,2} \|e^{-rH}\|_{2,2} \: \drm r \\
&\leq& \sqrt{n}\|V\|_{2,\text{HS}} \|e^{-t_0H}\|_{2,\infty} \int_0^{t_0} \| 
e^{-s(H+V)}\|_{2,2} \drm s.
\end{eqnarray*}
In the last inequality we used a change of variables and the fact that for all 
$t \geq 0$ we have $\|e^{-tH}\|_{2,2} \leq 1$. Finally, for the first integral 
we use that $\|A\|_{\text{HS}} = \|A^*\|_{\text{HS}}$ and obtain in a similar 
fashion that
  \begin{eqnarray*}
&& \int_0^{t_0} \| e^{-(2t_0-s)(H+V)}Ve^{-sH} \|_{\text{HS}} \:\drm s  = 
\int_0^{t_0} \|e^{-sH}V e^{-(2t_0-s)(H+V)} \|_{\text{HS}} \:\drm s \\
&\leq& \sqrt{n}\|V\|_{2,\text{HS}} \|e^{-t_0(H+V)}\|_{2,\infty} \int_{0}^{t_0} 
\|e^{-sH}\|_{2,2} \| e^{-(t_0-s)(H+V)}\|_{2,2}  \: \drm s \\
&\leq& \sqrt{n}\|V\|_{2,\text{HS}} \|e^{-t_0(H+V)}\|_{2,\infty} \int_0^{t_0} \| 
e^{-s(H+V)}\|_{2,2} \:\drm s.
  \end{eqnarray*}
This concludes the proof.
 \end{proof} 
A slightly unpleasant feature of the previous theorem is the fact that we need 
to assume that also the perturbed semigroup is ultracontractive, which might 
be difficult to check. In the scalar case, however, this does usually not pose a problem, since in this case the boundedness of $e^{-t_0(H+V)}: L^2(X) \to 
L^\infty(X)$ automatically follows from the corresponding boundedness of 
$e^{-t_0H}$ provided the last semigroup is 
positivity preserving. Also, we just note that here the additional assumption that $V$ is bounded is not necessary.

To overcome the described problem in the vector-valued case, we now make the additional assumption that 
there is a dominating semigroup on the scalar $L^2$--space. This requires some terminology:

Let $H_0$ be a selfadjoint lower-semibounded operator in $L^2(X)$. We say that its semigroup 
$(e^{-t H_0})_{t\ge 0}$ \textbf{dominates} $(e^{-tH})_{t\ge 0}$ if the following relation is satisfied for all $t>0$:
$$\vert e^{-tH}f\vert(x)\leq e^{-tH_0}\vert f\vert(x), \quad (x\in X, f\in 
L^2(X;\KK^n)).$$  
This implies $(e^{-tH_0})_{t\ge 0}$ is positivity preserving (i.e., for all $t\ge 0$: $e^{-tH_0}f \geq 0$ if $f\geq 0$). 
\begin{remark} 
  Domination of semigroups has a long history and its beginnings are
  intimately related to the situation we have in mind, the semigroups
  of the Laplace--Beltrami operator on functions and the semigroup of
  the Hodge--Laplacian on $1$--forms. See
  \cite{HessSchraderUhlenbrock-77,Simon-77} for early results. In
  \cite{HessSchraderUhlenbrock-80} the form characterization of
  domination was used to show that the semigroup of the
  Bochner-Laplacian on a Riemannian manifold is in fact dominated by
  the semigroup of the Laplace-Beltrami operator. Furthermore, it was shown 
  that under suitable conditions, the Hodge-deRham Laplacian
  is dominated by a Schr\"odinger operator generated by the
  Laplace-Beltrami plus a suitable potential depending on Ricci
  curvature, as we will use later. Especially in Riemannian geometry,
  this fact has been used extensively to study geometric and
  topological properties of manifolds as well as properties of the semigroup and corresponding heat
  kernel of generalized Schr\"odinger operators on vector
  bundles, see \cite{Gueneysu-14,Gueneysu-10,
    Gueneysu-16,ElworthyRosenberg-91,RoseStollmann-15,Rose-17,Rose-16,Rose-16b,CoulhonZhang-07}
  and the references therein. For a recent survey, see Section 2 in 
  \cite{RoseStollmann-18}; for an abstract point of view and a more
  thorough discussion of the literature on  semigroup domination, see the recent \cite{LenzSchmidtWirth-17}.
\end{remark}

\begin{theorem}\label{thm:ultra} 
Assume that $H\geq 0$ is a selfadjoint operator in $L^2(X;\KK^n)$ and that $V\in 
L^2(X;\KK^{n\times n})$ with $V \succeq 0$.  Moreover, let  $H_0$ be selfadjoint and lower-semibounded in $L^2(X)$ such that $(e^{-tH_0})_{t\ge 0}$ dominates $(e^{-tH})_{t\ge 0}$ and in addition
$e^{-t_0H_0}\in \mathcal L (L^2(X),L^\infty(X))$ for some $t_0>0$. Then
 \begin{equation}
  \label{eq:80}
  \|e^{-2t_0H}-e^{-2t_0(H+V)}\|_{\HS} \leq 2\, \sqrt{n} \|V\|_{2,\HS} 
\|e^{-t_0H_0}\|_{2,\infty} \cdot t_0.
\end{equation}
If, furthermore, $V$ is bounded, also the following estimate holds true:
 \begin{equation}
  \label{eq:8}
  \|e^{-2t_0H}-e^{-2t_0(H+V)}\|_{\HS} \leq 2\, \sqrt{n} \|V\|_{2,\HS} 
\|e^{-t_0H_0}\|_{2,\infty} \int_0^{t_0} \|e^{-s(H+V)}\|_{2,2}\: \drm s.
\end{equation}
\end{theorem}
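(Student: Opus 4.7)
The plan is to combine Duhamel's formula with Proposition \ref{prop:2.6}, in the same spirit as the proof of Theorem \ref{thm:truncated}, but now importing the required ultracontractivity of the perturbed semigroup from the dominating scalar semigroup $(e^{-tH_0})_{t \ge 0}$ instead of assuming it directly. I would first treat the case of bounded $V$; Duhamel then gives
\[
e^{-2t_0H} - e^{-2t_0(H+V)} = \int_0^{2t_0} e^{-(2t_0-s)(H+V)} V e^{-sH} \, \drm s,
\]
and I split the integral at $s = t_0$.

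On the upper piece $s \in [t_0, 2t_0]$, using that $e^{-t_0 H}$ commutes with $e^{-(s-t_0)H}$, I write $e^{-sH} = e^{-t_0 H} e^{-(s-t_0)H}$ and estimate
\[
\|e^{-(2t_0-s)(H+V)} V e^{-t_0 H} e^{-(s-t_0)H}\|_{\HS} \le \|e^{-(2t_0-s)(H+V)}\|_{2,2}\, \|V e^{-t_0 H}\|_{\HS}\, \|e^{-(s-t_0)H}\|_{2,2}.
\]
Proposition \ref{prop:2.6} together with the hypothesis that $(e^{-tH_0})_{t \ge 0}$ dominates $(e^{-tH})_{t \ge 0}$ gives $\|V e^{-t_0 H}\|_{\HS} \le \sqrt{n}\|V\|_{2,\HS}\|e^{-t_0 H_0}\|_{2,\infty}$. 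On the lower piece $s \in [0, t_0]$ I would use $\|A\|_{\HS} = \|A^*\|_{\HS}$ to rewrite the integrand as $e^{-sH} V e^{-(2t_0-s)(H+V)}$, factor $e^{-(2t_0-s)(H+V)} = e^{-t_0(H+V)} e^{-(t_0-s)(H+V)}$, and estimate symmetrically --- but this time Proposition \ref{prop:2.6} must be applied with $T = e^{-t_0(H+V)}$, so I also need $\|e^{-t_0(H+V)}\|_{2,\infty} \le \|e^{-t_0 H_0}\|_{2,\infty}$.

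The crucial auxiliary step is therefore to show that $(e^{-t(H+V)})_{t \ge 0}$ is itself dominated by $(e^{-tH_0})_{t \ge 0}$. I would establish this via the Trotter product formula: since $V(x) \succeq 0$ implies $|e^{-tV/n} f|(x) \le |f(x)|$ pointwise, while $e^{-tH/n}$ is dominated by $e^{-tH_0/n}$ by assumption, induction on $(e^{-tH/n}e^{-tV/n})^n$ yields $|(e^{-tH/n}e^{-tV/n})^n f|(x) \le e^{-tH_0}|f|(x)$, and one passes to the strong limit as $n \to \infty$. Once this is in place, a change of variables shows that each piece of the split contributes $\sqrt{n}\|V\|_{2,\HS}\|e^{-t_0 H_0}\|_{2,\infty}\int_0^{t_0}\|e^{-u(H+V)}\|_{2,2}\, \drm u$, yielding \eqref{eq:8}; bounding $\|e^{-u(H+V)}\|_{2,2} \le 1$ (valid because $H+V \ge 0$) then gives the factor $t_0$ and hence \eqref{eq:80} in the bounded case. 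To remove the boundedness hypothesis for \eqref{eq:80}, I would approximate $V$ by bounded truncations $V_k \succeq 0$ with $V_k \to V$ monotonically in the form sense, apply the bounded estimate, and pass to the limit using strong resolvent convergence of $H + V_k$ to $H + V$ together with lower semicontinuity of the Hilbert-Schmidt norm; the uniform bound $\|V_k\|_{2,\HS} \le \|V\|_{2,\HS}$ survives the limit. The main obstacle is precisely the auxiliary domination step: although folkloric in the semigroup-domination literature cited in the remark preceding the theorem, making the Trotter argument rigorous --- and then combining it cleanly with the truncation step for unbounded $V$ --- requires some care, which is presumably why the sharper inequality \eqref{eq:8} is stated only under the additional boundedness assumption on $V$.
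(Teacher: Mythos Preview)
Your proposal is correct and follows essentially the same route as the paper's proof: the paper also first treats bounded $V$, establishes the domination of $(e^{-t(H+V)})_{t\ge 0}$ by $(e^{-tH_0})_{t\ge 0}$ via exactly the Trotter iteration you describe, and then invokes Theorem~\ref{thm:truncated} (whose Duhamel argument you have inlined) to obtain \eqref{eq:8}; the passage to unbounded $V$ is likewise done by monotone truncation, strong resolvent convergence, and lower semicontinuity of the Hilbert--Schmidt norm (the paper phrases this last step via weak compactness of the unit ball in $\mathcal S_2$, which amounts to the same thing).
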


\begin{remark}
Given the above assumptions on $H$ and $V$, the operator $H+V$ is nonnegative. 
In our later applications, this operator will even be positive. Taking this into 
account, it is good to observe that if $H+V \geq \rho_0$ for some $\rho_0 \geq 
0$, then the spectral theorem implies
\begin{equation}
\int_0^{t_0} \|e^{-s(H+V)}\|_{2,2} \:\drm s \leq \int_0^{t_0} e^{-\rho_0 s} \drm s = 
\left\{
  \begin{array}{cl}
    t_0, & \rho_0 = 0, \\[4pt]
    \frac 1 {\rho_0} \left( 1 - e^{-t_0 \rho_0} \right), & \rho_0 > 0.
  \end{array}\right. 
\label{eq:3}
\end{equation}

\end{remark}

\begin{proof}[Proof of Theorem \ref{thm:ultra}]
We first treat the case that $V$ is bounded and 
use the fact that $V \succeq 0$ (i.e., $V(x) \geq 0$ for all $x \in X$), which implies that for $t>0$ arbitrary the operator norm of the matrix $e^{-tV(x)} : \KK^n \to \KK^n$ is at 
most $1$. This implies the following estimate for $f \in L^2(X;\KK^n)$ and $x \in X$:
$$ |e^{-tV}f|(x) = |e^{-tV(x)}f(x)| \leq \|e^{-tV(x)}\| |f(x)| \leq |f(x)|.$$
Moreover, since $e^{-tH_0}$ is positivity 
preserving, we can use the previous estimate and the domination property to obtain 
$$ |e^{-tH}e^{-tV}f|(x) \leq e^{-tH_0}|e^{-tV}f|(x) \leq e^{-tH_0} |f| 
(x).$$
By induction, we thus see that for all $k\in\NN$ we have the pointwise 
inequality 
$$\vert \left(e^{-\frac{t}{k}H}e^{-\frac{t}{k}V}\right)^k 
f\vert\leq\left(e^{-\frac{t}{k}H_0}\right)^k\vert f\vert = e^{-{t}H_0}\vert f\vert,\quad f\in 
L^2(X;\KK^n).$$
The Trotter product formula and the assumption that $e^{-t_0H_0} : L^2 \to L^\infty$ is bounded imply that
\begin{equation}
\Vert e^{-t_0(H+V)}\Vert_{2,\infty}\leq \Vert 
e^{-t_0H_0}\Vert_{2,\infty}.\label{eq:4}
\end{equation}
In particular, we also obtain 
\begin{equation}
 \Vert e^{-t_0H}\Vert_{2,\infty}\leq \Vert 
e^{-t_0H_0}\Vert_{2,\infty}.\label{eq:7}
\end{equation}
Hence, the assumptions of Theorem \ref{thm:truncated} are satisfied and 
(\ref{eq:8}) immediately follows from (\ref{eq:6}), using the above estimates 
(\ref{eq:4}) and (\ref{eq:7}).
Thus, we have established estimate (\ref{eq:8})  for bounded potentials.

For general $V \in L^2(X;\KK^n)$ 
let us introduce
 $$ V^{(k)}(x):= 1_{\{ x \in X : \|V(x)\| \leq k\}}(x) \cdot V(x), \quad k \in \NN.$$
Then $V \succeq V^{(k)} \succeq 0$ and $V^{(k)} \to V$ pointwise for $k \to 
\infty$. Moreover, 
 $$ H + V^{(k)} \overset{\text{srs}}{\to} H + V \quad \text{for } k \to 
\infty,$$ 
 by monotone form convergence, see, e.g., \cite{MR0500266}; here 'srs' refers to 
convergence in the strong resolvent sense, so that we have strong convergence
of semigroups as well:
 \begin{equation}  
  e^{-t(H + V^{(k)})} \overset{\text{s}}{\to} e^{-t(H + V)} \quad \text{for } k 
\to \infty.    \label{eq:12}
 \end{equation}
Since each $V^{(k)}$ is bounded and $\|V^{(k)}\|_{2,\text{HS}} \leq 
\|V\|_{2,\text{HS}}$, we know from the first part of the proof, also using (\ref{eq:3}), that for all $k \in \NN$
$$   \|e^{-2t_0H}-e^{-2t_0(H+V^{(k)})}\|_{\text{HS}} \leq 2\, \sqrt{n} 
\|V\|_{2,\text{HS}} \|e^{-t_0H_0}\|_{2,\infty} \cdot t_0.$$
Noting that this bound is uniform in $k$, we can use the fact that the 
Hilbert-Schmidt operators $(\mathcal S_2, \|.\|_{\text{HS}})$ are a Hilbert 
space, and hence the corresponding unit ball is weakly compact, to see that 
there exists a subsequence $(k_l)_{l \in \NN}$ and $T \in \mathcal S_2$ such 
that 
\begin{equation}
    \label{eq:11}
  e^{-2t_0 H}- e^{-2t_0(H+V^{(k_l)})} \to T \quad \text{for } l \to \infty
\end{equation}
weakly, and 
$$ \|T\|_{\text{HS}} \leq \liminf_l  \|e^{-2t_0 H}- 
e^{-2t_0(H+V^{(k_l)})}\|_{\text{HS}} \leq 2\, \sqrt{n} \|V\|_{2,\text{HS}} 
\|e^{-t_0H_0}\|_{2,\infty} \cdot t_0.$$
(\ref{eq:12}) shows that $T=e^{-2t_0 H}- e^{-2t_0(H+V)}$ and hence the estimate (\ref{eq:80}) follows. 
\end{proof}

\section{Bounds on the first Betti number} 
\label{sec3}
Let us now introduce the set--up to which we apply the results of the preceding 
section. We fix a compact $n$--dimensional Riemannian manifold $(M,g)$ and use the
Riemannian volume element to define the corresponding $L^2$--spaces.

Next, we introduce the Laplace--Beltrami operator

$$
\Delta=\delta d \ge 0,
$$
which, according to our sign convention, is a non-negative selfadjoint operator in $L^2(M)$.
Moreover, we consider the Hodge--Laplacian
$$
\Delta^1=\delta d + d\delta \ge 0
$$ 
acting on $1$-- forms, so that it is a non-negative selfadjoint operator in the Hilbert space $L^2(M;\Omega^1)$
of square integrable sections of the cotangent bundle. We will frequently identify $L^2(M;\Omega^1)=L^2(M;\RR^n)$.

The Weitzenböck formula gives that
$$
\Delta^1=\nabla^*\nabla+\Ric ,
$$ 
where the latter term gives a matrix--valued potential $M\ni x\mapsto \Ric_x$, and $\Ric_x$ is the
Ricci tensor interpreted as an endomorphism of the cotangent space $\Omega^1_x(M):=(T_xM)^*$. The former term, $\nabla^*\nabla$, is the
rough or Bochner--Laplacian. Note that $\Ric_x$ is given by a symmetric matrix with entries varying smoothly in $x$ so that
$$
\Ric\in L^\infty(M;\RR^{n\times n})\subset L^2(M;\RR^{n\times n}) 
$$
and we can substitute $\Ric$ for $V$ of the preceding section. It follows from the Hodge theorem, see \cite{Jost-08}, Thm. 2.2.1, that we can identify
the first real cohomology group with the space of harmonic $1$--forms,
$$
H^1(M)\simeq \Ker(\Delta^1) ,
$$
so that the first Betti number equals
$$
b_1(M)=\dim\left(\Ker(\Delta^1)\right) .
$$
In view of what we studied in the previous section, it is therefore
natural to consider $H:=\Delta^1$. The appropriate comparison operator $H'$ is constructed next. We fix $\rho_0>0$ and consider
the symmetric matrix $\Ric_x-\rho_0$, where we omit the identity matrix in the last expression. We can decompose
$$
\Ric_x-\rho_0=(\Ric_x-\rho_0)_+-(\Ric_x-\rho_0)_-,
$$
where
$(\Ric_x-\rho_0)_+$ is non--negative definit, $(\Ric_x-\rho_0)_-$ is positive definite and their product is $0$. This decomposition
is measurable in $x$, since the involved matrices are continuous as functions of $x$.

We put

\begin{equation}
H':= \Delta^1+(\Ric_x-\rho_0)_- .\label{eq:13}
\end{equation}
Note that 
$$
V:=(\Ric-\rho_0)_- \succeq 0
$$
in the sense of the previous section.
Moreover
\begin{eqnarray*}
H'&=& \Delta^1+(\Ric-\rho_0)_- \\
&=& \nabla^*\nabla+\Ric -\rho_0+\rho_0+(\Ric-\rho_0)_- \\
&=&\nabla^*\nabla+(\Ric-\rho_0)_++\rho_0\\
&\ge& \rho_0 .
\end{eqnarray*}
As a first application of our abstract results we note that Proposition \ref{prop:birman} readily implies:
\begin{corollary}
 For $\rho_0>0$, $t_0>0$ and $p \geq 1$:
 \begin{equation}\label{bettischatten}
 b_1(M)\le \left(1-e^{-2\rho_0 t_0}\right)^{-p}\left\| e^{-2t_0\Delta^1}-e^{-2t_0(\Delta^1+(\Ric-\rho_0)_-)}
 \right\|_{\mathcal{S}_p}^p .
   \end{equation}
\end{corollary}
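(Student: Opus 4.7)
The plan is to apply Proposition \ref{prop:birman}(ii), specifically estimate (\ref{eq:2}), to the pair $(H,H')=(\Delta^1,\Delta^1+(\Ric-\rho_0)_-)$ with time parameter $2t_0$ in place of $t_0$. This is essentially a verification that all hypotheses of the abstract Birman--Schwinger result are met in the geometric setting described above.

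First I would record the identification $b_1(M)=\dim\Ker(\Delta^1)$, which is already established via the Hodge theorem in the preceding discussion, so that the quantity we want to bound is exactly $\dim\Ker(H)$ with $H=\Delta^1$. Next I would check the two spectral bounds required by Proposition \ref{prop:birman}: $H=\Delta^1\geq 0$ is built into its definition, while $H'=\Delta^1+(\Ric-\rho_0)_-\geq\rho_0>0$ was verified in the computation just before the statement of the corollary, using that $\nabla^*\nabla\geq 0$ and $(\Ric-\rho_0)_+\geq 0$.

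The last hypothesis to handle is the $\mathcal S_p$-membership of the semigroup difference $D_{2t_0}=e^{-2t_0\Delta^1}-e^{-2t_0(\Delta^1+(\Ric-\rho_0)_-)}$. If the right-hand side of (\ref{bettischatten}) is infinite there is nothing to prove, so one may assume $\|D_{2t_0}\|_{\mathcal S_p}<\infty$; this in particular makes $D_{2t_0}$ compact, as required by Proposition \ref{prop:birman}. Applying (\ref{eq:2}) with $t_0$ replaced by $2t_0$ then yields
\[
b_1(M)=\dim\Ker(\Delta^1)\le\bigl(1-e^{-2\rho_0 t_0}\bigr)^{-p}\,\|D_{2t_0}\|_{\mathcal S_p}^{\,p},
\]
which is precisely (\ref{bettischatten}).

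There is essentially no obstacle here: the corollary is a direct specialization of Proposition \ref{prop:birman}(ii) to the geometric data, and all the non-trivial work (positivity of $H'$, measurability of $(\Ric-\rho_0)_-$, identification of $\Ker(\Delta^1)$ with $H^1(M)$) has already been carried out in the setup preceding the statement. The only mild subtlety is the factor $2$ appearing in the exponent $1-e^{-2\rho_0 t_0}$ on the right-hand side, which simply reflects that one is applying the abstract principle at time $2t_0$ rather than $t_0$, matched to the form of the semigroup difference bounds in Theorems \ref{thm:truncated} and \ref{thm:ultra} which also involve $2t_0$.
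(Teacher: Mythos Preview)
Your proposal is correct and matches the paper's own approach: the paper simply states that the Corollary ``readily'' follows from Proposition~\ref{prop:birman}, and you have spelled out precisely this verification (including the application at time $2t_0$). The only small addition the paper makes is the remark, immediately after the Corollary, that on a compact manifold both semigroups are in fact trace class, so the right-hand side of \eqref{bettischatten} is always finite and your case distinction ``if the RHS is infinite there is nothing to prove'' is never actually needed.
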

Note that the semigroups involved consist of trace class operators, so that the RHS of
\eqref{bettischatten} is finite for all values of the parameters involved.

We go on to specialize to $p=2$, using Theorem \ref{thm:ultra}. To this end we recall the following results 
from \cite{HessSchraderUhlenbrock-80}
that we mentioned already, where the authors use the opposite sign convention!
We define
\begin{equation}
\rho(x):=\min\sigma(\Ric_x), \quad x \in M.\label{eq:14}
\end{equation}
\begin{proposition}
 For the operators defined above, we have the following domination
 of the corresponding semigroups:
 \begin{itemize}
  \item[\emph{(1)}] For all $\omega\in L^2(M;\Omega^1)$ and $t\ge 0$:
  $$
  \left| e^{-t\nabla^*\nabla}\omega\right|\le e^{-t\Delta}|\omega| .
  $$
  \item[\emph{(2)}] For all $\omega\in L^2(M;\Omega^1)$ and $t\ge 0$:
  $$
  \left| e^{-t\Delta^1}\omega\right|\le e^{-t(\Delta+\rho)}|\omega| .
  $$
 \end{itemize}
\end{proposition}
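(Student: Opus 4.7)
The plan is to prove (1) and (2) separately, deriving (2) from (1) by a Trotter product argument, and proving (1) through a form--level Kato inequality combined with the standard Beurling--Deny criterion for semigroup domination. Recall that domination of $(e^{-tH})$ on $L^{2}(M;\Omega^{1})$ by a positivity preserving semigroup $(e^{-tH_{0}})$ on $L^{2}(M)$ is equivalent to the form--level inequalities: for all $\omega$ in the form domain of $H$, $|\omega|$ lies in the form domain of $H_{0}$ and $q_{H_{0}}(|\omega|)\le q_{H}(\omega)$.

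For (1), the key ingredient is the Kato inequality for the Bochner Laplacian in the form
\begin{equation*}
\|d|\omega|\|_{2}^{2}\le \|\nabla\omega\|_{2}^{2}\qquad(\omega\in\mathcal{D}(\nabla)),
\end{equation*}
which is the form--level counterpart of the pointwise distributional statement $\Delta|\omega|\le \sprod{\omega/|\omega|}{\nabla^{*}\nabla\omega}$ on $\{|\omega|>0\}$. I would make this rigorous by replacing $|\omega|$ with the regularized expression $\sqrt{|\omega|^{2}+\varepsilon}$, differentiating, using the compatibility of the Levi--Civita connection with the fibre metric, and then passing to the limit $\varepsilon\searrow 0$ via dominated convergence. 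Since $(e^{-t\Delta})$ is positivity preserving on $L^{2}(M)$, the Beurling--Deny criterion then delivers the assertion $|e^{-t\nabla^{*}\nabla}\omega|(x)\le e^{-t\Delta}|\omega|(x)$.

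For (2), I would invoke the Trotter product formula
\begin{equation*}
e^{-t\Delta^{1}}\omega=\lim_{n\to\infty}\Bigl(e^{-\tfrac{t}{n}\nabla^{*}\nabla}\,e^{-\tfrac{t}{n}\Ric}\Bigr)^{n}\omega,
\end{equation*}
which is available since $\Delta^{1}=\nabla^{*}\nabla+\Ric$ and $\Ric$ is bounded and smooth. By definition of $\rho$ we have $\Ric_{x}\succeq\rho(x)I$, so the matrix exponential obeys $\|e^{-s\Ric_{x}}\|\le e^{-s\rho(x)}$, which translates into the pointwise estimate $|e^{-s\Ric}\eta|(x)\le e^{-s\rho(x)}|\eta(x)|$. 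Combining this bound with part (1) and the positivity preservation of the scalar semigroups, an induction on the number of Trotter factors yields
\begin{equation*}
\Bigl|\bigl(e^{-\tfrac{t}{n}\nabla^{*}\nabla}\,e^{-\tfrac{t}{n}\Ric}\bigr)^{n}\omega\Bigr|\le \bigl(e^{-\tfrac{t}{n}\Delta}\,e^{-\tfrac{t}{n}\rho}\bigr)^{n}|\omega|,
\end{equation*}
and a second application of Trotter on the scalar side produces $e^{-t(\Delta+\rho)}|\omega|$ in the limit, establishing (2).

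The main obstacle is a clean form--level proof of the refined Kato inequality, in particular the justification that $|\omega|$ lies in the form domain of $\Delta$ whenever $\omega$ lies in the form domain of $\nabla^{*}\nabla$, together with the careful handling of the set $\{\omega=0\}$ through the $\varepsilon$--regularization. Once (1) is in place, the deduction of (2) reduces to routine bookkeeping with strong semigroup convergence and the pointwise matrix estimate $\|e^{-s\Ric_{x}}\|\le e^{-s\rho(x)}$.
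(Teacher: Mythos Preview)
The paper does not give its own proof of this proposition; it is quoted verbatim from Hess--Schrader--Uhlenbrock \cite{HessSchraderUhlenbrock-80} (see the sentence immediately preceding the proposition). So there is no ``paper's proof'' to compare against, only the original source. Your outline---pointwise/form Kato inequality for the Bochner Laplacian to get (1), then Trotter factorization with the fibrewise bound $\|e^{-s\Ric_x}\|\le e^{-s\rho(x)}$ to pass to (2)---is precisely the route taken in \cite{HessSchraderUhlenbrock-80} and \cite{Simon-77}, so you are on the right track.

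One technical point deserves correction. The equivalence you state for domination is too weak: the diagonal form inequality $q_{H_0}(|\omega|)\le q_H(\omega)$ for all $\omega$ in the form domain is \emph{necessary} but not in general \emph{sufficient} for semigroup domination. The correct form characterization (see \cite{HessSchraderUhlenbrock-77,Simon-77}, or Ouhabaz's criterion in the later literature) requires an off-diagonal condition of the type
\[
\operatorname{Re} q_H\bigl(\omega,\,\operatorname{sgn}(\omega)\,\varphi\bigr)\;\ge\; q_{H_0}(|\omega|,\varphi)
\]
for all $\omega$ in the form domain of $H$ and all nonnegative $\varphi$ in the form domain of $H_0$ with $\varphi\ge|\omega|$, together with an ideal property of the form domains. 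In practice the pointwise Kato computation you sketch (regularize $|\omega|$ by $\sqrt{|\omega|^2+\varepsilon}$, use metric compatibility of $\nabla$, pass to the limit) yields this stronger off-diagonal inequality directly, so the argument ultimately goes through; but the abstract criterion you invoke should be stated in its sharp form. Once (1) is fixed in this way, your deduction of (2) via Trotter is entirely standard and correct.
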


Therefore we can apply the $p=2$ case of Theorem \ref{thm:ultra}, in particular (\ref{eq:8}) and (\ref{eq:3}),  with the above bound \eqref{bettischatten} 
to obtain the following estimate, putting $H_0:=\Delta+\rho$:
\begin{theorem}
 \label{main}
 For $\rho_0>0$ and $t_0>0$: 
 \begin{equation}\label{bettimain}
 b_1(M)\le \frac{4n}{(\rho_0(1+e^{-t_0\rho_0}))^2} \left\| (\Ric_\cdot-\rho_0)_-\right\|_{2,\HS}^2\left\| e^{-t_0(\Delta+\rho)}\right\|^2_{2,\infty} . 
   \end{equation}
\end{theorem}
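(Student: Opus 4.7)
The plan is to specialize the bound (\ref{bettischatten}) to $p=2$ and then estimate the resulting Hilbert-Schmidt norm of the semigroup difference by means of Theorem \ref{thm:ultra}. The operators are dictated by the setup preceding the theorem: I take $H:=\Delta^1$, $V:=(\Ric-\rho_0)_-$ (which satisfies $V\succeq 0$ as noted after (\ref{eq:13})), and $H':=\Delta^1+V$, so that $H'\ge\rho_0$ by the Weitzenböck rewriting just performed. For the dominating scalar operator required by Theorem \ref{thm:ultra}, I choose $H_0:=\Delta+\rho$ with $\rho$ as in (\ref{eq:14}); part (2) of the domination proposition then supplies the pointwise bound $|e^{-t\Delta^1}\omega|\le e^{-tH_0}|\omega|$ that is the crucial input.

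With these identifications, inequality (\ref{eq:8}) applied with time parameter $t_0$ reads
$$\|e^{-2t_0\Delta^1}-e^{-2t_0 H'}\|_{\HS} \le 2\sqrt{n}\,\|V\|_{2,\HS}\,\|e^{-t_0 H_0}\|_{2,\infty}\int_0^{t_0}\|e^{-sH'}\|_{2,2}\,\drm s.$$
Since $H'\ge\rho_0>0$, the spectral-calculus bound (\ref{eq:3}) controls the time integral by $\rho_0^{-1}(1-e^{-t_0\rho_0})$. Squaring the resulting estimate and feeding it into (\ref{bettischatten}) with $p=2$ yields
$$b_1(M) \le \frac{(1-e^{-t_0\rho_0})^2}{(1-e^{-2t_0\rho_0})^2}\cdot\frac{4n}{\rho_0^2}\,\|V\|_{2,\HS}^2\,\|e^{-t_0H_0}\|_{2,\infty}^2.$$

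The remaining step is a purely algebraic simplification of the prefactor. Using the factorization $1-e^{-2t_0\rho_0}=(1-e^{-t_0\rho_0})(1+e^{-t_0\rho_0})$, the factors $(1-e^{-t_0\rho_0})^2$ cancel, leaving $1/(1+e^{-t_0\rho_0})^2$; combined with $\rho_0^{-2}$ this reproduces exactly the constant $4n/(\rho_0(1+e^{-t_0\rho_0}))^2$ claimed in (\ref{bettimain}).

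I do not anticipate a serious obstacle: all the substantive analytic work is already encapsulated in Proposition \ref{prop:birman}, Theorem \ref{thm:ultra}, and the Hess-Schrader-Uhlenbrock domination result, so the proof is essentially bookkeeping. The one small thing to keep track of is that the time parameter on the left-hand side of (\ref{bettischatten}) is $2t_0$, not $t_0$, which is precisely what matches the $2t_0$ appearing inside the exponentials in (\ref{eq:8}); hence the two bounds dovetail without any time rescaling.
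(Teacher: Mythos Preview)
Your proposal is correct and follows essentially the same route as the paper: combine the $p=2$ case of \eqref{bettischatten} with estimate \eqref{eq:8} of Theorem~\ref{thm:ultra} (using $H_0=\Delta+\rho$ and the domination from part (2) of the Hess--Schrader--Uhlenbrock proposition), bound the time integral via \eqref{eq:3}, and simplify the resulting constant by factoring $1-e^{-2t_0\rho_0}$. Your explicit verification of the algebraic cancellation and your remark about the matching $2t_0$ in \eqref{bettischatten} and \eqref{eq:8} are exactly the bookkeeping points the paper leaves implicit.
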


We want to point out again that earlier results on Betti number bounds with negative curvature assumptions always depended implicitly on an a priori bound at least on the heat kernel or even a Sobolev constant. In contrast, our result shows that such a control is indeed not needed if we have bounds on certain norms of the perturbed heat semigroup for one $t_0>0$, which could be achieved using other techniques. 

Let us provide at least one example showing how the above bound (\ref{bettimain}) can be made more explicit if further information on the heat kernel is available. Here we use an upper bound on the kernel  going back to the celebrated
paper \cite{LiYau-86}.

\begin{corollary}\label{cor:main}
 Assume that $\Ric\ge -K$, where $K\ge 0$ and denote by $D$ the diameter of $M$. Then
 \begin{equation}
 b_1(M)\le c(n) \rho_0^{-2} \left\| (\Ric_\cdot-\rho_0)_-\right\|_{2,\HS}^2\Vol(M)^{-1}e^{\alpha(n)KD^2},
 \end{equation}
where $c(n),\alpha(n)$ depend on $n$ only.
\end{corollary}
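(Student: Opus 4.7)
The plan is to derive this corollary directly from Theorem \ref{main} by choosing $t_0$ proportional to $D^2$ and using a Li--Yau type on-diagonal heat kernel upper bound to control the ultracontractivity term. First I would peel off the potential $\rho$: since $\Ric \geq -K$ means $\rho(x) = \min \sigma(\Ric_x) \geq -K$ pointwise, the spectral theorem (or rather, the trivial bound $e^{-t_0 \rho} \leq e^{t_0 K}$ as a multiplication operator, combined with semigroup domination once more) gives
\begin{equation*}
\| e^{-t_0(\Delta+\rho)} \|_{2,\infty}^2 \leq e^{2 t_0 K}\,\| e^{-t_0\Delta} \|_{2,\infty}^2.
\end{equation*}

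Next I would convert the ultracontractivity constant into an on-diagonal heat kernel bound. Writing $p_t$ for the heat kernel of the Laplace--Beltrami operator $\Delta$, the standard computation gives
\begin{equation*}
\| e^{-t_0\Delta} \|_{2,\infty}^2 = \esssup_{x \in M} \int_M p_{t_0}(x,y)^2\,\drm\Vol(y) = \esssup_{x \in M} p_{2t_0}(x,x),
\end{equation*}
using symmetry and the semigroup property. Now I would pick $t_0 := D^2/2$, so that $B(x,\sqrt{2t_0}) = B(x,D) = M$ for every $x$. The Li--Yau upper bound for manifolds with $\Ric \geq -K$, as in \cite{LiYau-86}, then yields
\begin{equation*}
p_{2t_0}(x,x) \leq \frac{c_1(n)\, e^{c_2(n) K t_0}}{\Vol(B(x,\sqrt{2t_0}))} = \frac{c_1(n)\, e^{c_2(n) K D^2/2}}{\Vol(M)}.
\end{equation*}

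Inserting these two estimates into the bound of Theorem \ref{main} and using the trivial $(1 + e^{-t_0\rho_0})^2 \geq 1$ to absorb the harmless prefactor, I obtain
\begin{equation*}
b_1(M) \leq \frac{4n \cdot c_1(n)}{\rho_0^2 \, \Vol(M)} \, \| (\Ric_\cdot - \rho_0)_- \|_{2,\HS}^2 \, e^{(c_2(n)/2 + 1) K D^2},
\end{equation*}
which is exactly the claimed inequality with $c(n) := 4n\, c_1(n)$ and $\alpha(n) := c_2(n)/2 + 1$. The only real subtlety is selecting the correct form of the Li--Yau heat kernel upper bound so that the volume in the denominator is that of the full manifold (which forces the choice $t_0 \sim D^2$) while the exponential factor retains the clean dependence $e^{\alpha(n) K D^2}$; everything else is a routine substitution into the master estimate \eqref{bettimain}.
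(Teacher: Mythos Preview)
Your proposal is correct and follows essentially the same route as the paper: peel off $\rho$ via $\rho\ge -K$, express $\|e^{-t_0\Delta}\|_{2,\infty}^2$ through the heat kernel, apply the Li--Yau bound with $t_0$ of order $D^2$ so that the relevant ball is all of $M$, and substitute into \eqref{bettimain}. The only cosmetic difference is that you use the semigroup identity $\|p_{t_0}(x,\cdot)\|_2^2=p_{2t_0}(x,x)$ and take $t_0=D^2/2$, whereas the paper integrates the pointwise Li--Yau bound on $p_{t_0}(x,y)^2$ with $t_0=D^2$; both yield the same final form after adjusting the dimensional constants.
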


\begin{proof}
 First note that
 $$
 \left\| e^{-t_0(\Delta+\rho)}\right\|^2_{2,\infty}\le e^{2t_0K}\left\| e^{-t_0\Delta}\right\|^2_{2,\infty}
 $$
 since $\rho\ge -K$. Denoting the heat kernel by $p$, we get
 $$
 \left\| e^{-t_0\Delta}\right\|^2_{2,\infty}=\esssup_{x\in M}\| p(t_0;x,\cdot)\|^2_2 .
 $$
We now infer the heat kernel estimate from Corollary 3.1 in \cite{LiYau-86}, setting $t_0=D^2$ so that the 
balls appearing equal $M$, whence

$$
|p(t_0;x,\cdot)|^2\le c(n)\Vol(M)^{-2}e^{\alpha(n)KD^2} .
$$
Integrating this pointwise bound we arrive at

$$
\|p(t_0;x,\cdot)\|_2^2\le c(n)\Vol(M)^{-1}e^{\alpha(n)KD^2} .
$$
Combined with the above observation and adapting the dimension dependent constants, estimate (\ref{bettimain}) proves the claim.
\end{proof}

Finally, let us mention that other results on heat kernel bounds can easily be applied in a similar manner to Theorem \ref{main}, instead of the very crude 
one we employed in the preceding corollary. We conclude the paper with the following list of examples, stating conditions on $M$ where quantitative heat kernel bounds can be obtained. For more details, the reader should consult \cite{Grigoryan-09, RoseStollmann-18, Rose-17, Rose-16, Carron-16}.

\begin{example} The following assumptions admit quantitative bounds on the heat kernel:
\begin{enumerate}[(i)]

\item $M$ is geodesically complete, of dimension at least two, and 
possesses a distance function $r_\xi$, $\xi\in M$, such that 
$$\forall x,\xi \in M\colon \quad \vert \nabla r_\xi(x)\vert\leq 1, \quad \Delta 
r_\xi(x)\geq 2n,$$

\item $M$ is an $n$-dimensional minimal submanifold of $\RR^N$, $N>n$, i.e., all its mean curvature vectors vanish 
identically on $M$,
 
\item $M$ is a Cartan-Hadamard manifold, i.e., it is simply connected and its 
sectional curvature is non-positive everywhere,

\item $M$ is an $n$-dimensional manifold of bounded geometry, i.e., its Ricci 
curvature is bounded below and its injectivity radius is positive,
\item $M$ is complete and of dimension $n\geq 2$, the injectivity radius is positive, and there exist an $r\in (0,\diam(M))$ and a $p>n/2$ such that the quantity 
$$\sup_{x\in M} \frac{1}{\Vol(B(x,r))}\int_{B(x,r)}\rho_-^p\, \dvol$$
is small enough,
\item 
$M$ is of dimension $n\geq 3$, has bounded diameter and Ricci curvature bounded below, 
\item $M$ is of dimension $n\geq 2$, has bounded diameter, and we have
$$\int_0^{\diam M^2}\Vert \euler^{-t\Delta}\rho_-\Vert_\infty\drm t\leq \frac 1{16n}.$$
\end{enumerate}
\end{example}

\def\cprime{$'$}

\end{document}